\NewDocumentCommand{\DIV}{om}{%
  \IfValueT{#1}{\setcounter{#2}{\numexpr#1-1\relax}}%
  \csname #2\endcsname
}
\renewcommand\section{\@startsection {section}{1}{\z@}%
                                   {-3.5ex \@plus -1ex \@minus -.2ex}%
                                   {2.3ex \@plus.2ex}%
                                   {\normalfont\scshape}}
\newcommand{\bu}{\bm{u}}
\newcommand\bE{\mathbb{E}}
\newcommand\bP{\mathbb{P}}
\newtheorem*{theorem}{Theorem}
\author{Sina Baghal\footnote{Email: srezazadehbaghal@uwaterloo.ca}}
\date{}
\title{A matrix concentration inequality for products}
\begin{document}
\maketitle
\begin{abstract}
Abstract. We present a \textit{non-asymptotic concentration inequality} for the random matrix product 
    \begin{equation}\label{eq:Zn}
        Z_n = \left(I_d-\alpha X_n\right)\left(I_d-\alpha X_{n-1}\right)\cdots \left(I_d-\alpha X_1\right),
    \end{equation}
    where $\left\{X_k \right\}_{k=1}^{+\infty}$ is a sequence of bounded independent random positive semidefinite matrices with common expectation $\mathbb{E}\left[X_k\right]=\Sigma$. Under these assumptions, we show that, for small enough positive $\alpha$, $Z_n$ satisfies the concentration inequality 
    \begin{equation}\label{eq:CTbound}
     \mathbb{P}\left(\left\Vert Z_n-\mathbb{E}\left[Z_n\right]\right\Vert \geq t\right) \leq 2d^2\cdot\exp\left(\frac{-t^2}{\alpha \sigma^2} \right) \quad \text{for all } t\geq 0,
\end{equation}
where $\sigma^2$ denotes a variance parameter.
\end{abstract}
\begin{center}
\item \section*{1. Motivation}
\end{center}
Products of random matrices appear as building blocks for many stochastic iterative algorithms, \textit{e.g.} \cite{Oja1982, strohmer2007randomized}. While non-asymptotic bounds of averages of these matrices are well developed, \textit{e.g.} \cite{Tropp_2011, wainwright_2019}, the analogous bounds of their products are much harder to understand due to the non-commutative nature of matrix multiplication.  As such, efforts to understand bounds of this type have become an active area of research \textit{e.g.} \cite{HENRIKSEN202081, huang2020matrix, kathuria2020concentration}. 
\begin{center}
\item \section*{2. Contribution}
\end{center}
 In this note, we provide a non-asymptotic concentration bound \eqref{eq:CTbound} for the random matrix product $Z_n$ \eqref{eq:Zn}. These instances appear, for example, in the stochastic gradient descent algorithm applied to the linear least squares problem. We remark that bound \eqref{eq:CTbound} will be of special interest when $X_k$ is almost surely low rank for all $k$. In this event, almost all eigenvalues of each factor in the matrix product $Z_n$ are equal to 1 whereas $\bE[Z_n]$ has an exponentially decaying operator norm. (Note: Without loss of generality, we can assume that $\Sigma$ is positive definite.) Hence, it is interesting to observe that $Z_n$ concentrates around its mean with overwhelming probability as in \eqref{eq:CTbound}, especially in the case where $X_k$'s are almost surely low rank matrices.
 \begin{center}
   \item \section*{3. Related Work}  
 \end{center}
 In \cite{huang2020matrix}, using the uniform smoothness property of the Schatten $p$-norm, the authors have studied non-asymptotic bounds for the products of random matrices, in particular, random contractions \cite[Theorem 7.1]{huang2020matrix}. To apply their result to the matrix product \eqref{eq:Zn}, we will need to make some further assumptions. First, we need to assume some bound involving $\left\vert I-\alpha X_k\right\vert$ since the Araki-Lieb-Thirring inequality \cite[IX.2.11]{Bhatia} is used in their analysis. Second, we need to assume a lower bound $t^2\geq c\alpha^2\sum_{k=1}^n \Vert X_k-\Sigma\Vert^2$ which may grow linearly in $n$. This will be problematic particularly since we are only interested in the case where $t\leq 1$.

 On the other hand, compared to our result, the bound in \cite[Theorem 7.1]{huang2020matrix} has a weaker dependency on the dimension $d$  and, more importantly, it works in a broader variety of instances. For example, one can use their bound when in \eqref{eq:Zn}, instead of $I-\alpha X_k$, we consider the factors $I-\alpha_k X_k$ with $\alpha_k$ decaying at a proper rate.
 \begin{center} 
\item \section*{4. Concentration bound}
\end{center}
In this section, we prove our result \eqref{eq:CTbound}. The proof proceeds by constructing a martingale sequence satisfying bounded differences and then applying Azuma's inequality. We assume that the positive semidefinite random matrices $X_k$ in \eqref{eq:Zn} are drawn independently and they satisfy $\bE\left[X_k\right]=\Sigma$ for all $k$. In addition, we suppose that $X_k$ are uniformly bounded in the operator norm, meaning that there exists $r>0$ such that
\begin{equation*}
    \Vert X_k\Vert\leq r \quad \text{almost surely.}
\end{equation*}
Let $\bu_1,\cdots,\bu_d$ denote the eigenvectors of $\Sigma$ and $\lambda_1\geq \cdots \geq \lambda_d\geq  0$ denote the corresponding eigenvalues. For each $i\in [d]:=\{1,\cdots,d\}$, define $c_i$ to be the infimum over all positive reals for which
\begin{equation*}
     \left\Vert \left(X_k-\lambda_i I\right)\bu_i \right\Vert \leq c_i\lambda_i \quad \text{almost surely}.
\end{equation*}
Note that $c_i<+\infty$ almost surely as $c_i\leq 1+\frac{r}{\lambda_i}$ and also, because $X_k$ is positive semidefinite, $c_i=0$ whenever $\lambda_i=0$. We will use the following parameter to measure the amount of variation in $X_k$
\begin{equation*}\label{eq:V}
    \sigma^2:= \frac{4d}{3}\sum_{i=1}^{d}c_i^2\lambda_i.
\end{equation*}
 \begin{theorem}\label{prop1}
Suppose that $\alpha \in (0,\frac{1}{2r})$. Then the following concentration inequality holds. 
\begin{equation}\label{eq:norm_bound}
     \bP\left(\left\Vert Z_n-\bE\left[Z_n\right]\right\Vert \geq t\right) \leq 2d^2\cdot\exp\left(\frac{-t^2}{\alpha \sigma^2} \right).
\end{equation}
\end{theorem}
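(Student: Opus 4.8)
The plan is to reduce the operator-norm deviation to the deviations of the individual matrix entries in the eigenbasis of $\Sigma$, to control each entry by a scalar Doob martingale with bounded differences, and then to recombine the entrywise tail bounds through a \emph{variance-weighted} union bound that is tuned so as to reproduce $\sigma^2$ exactly. Write $W = Z_n - \bE[Z_n]$ and $P_k = I-\alpha X_k$, so that $\bar P := \bE[P_k]=I-\alpha\Sigma$ and $Z_n = P_n\cdots P_1$. Since $\|W\|\le\|W\|_F$, for any thresholds $s_{ij}\ge 0$ with $\sum_{i,j}s_{ij}^2\le t^2$ the event $\{\|W\|\ge t\}$ forces $|\,\bu_i^\top W\bu_j\,|\ge s_{ij}$ for at least one pair $(i,j)$ (else $\|W\|_F^2=\sum_{i,j}(\bu_i^\top W\bu_j)^2<\sum s_{ij}^2\le t^2$); hence $\bP(\|W\|\ge t)\le\sum_{i,j}\bP(|\bu_i^\top W\bu_j|\ge s_{ij})$, and I would fix the $s_{ij}$ only at the very end.

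For a fixed pair $(i,j)$ I would set $\mathcal F_k=\sigma(X_1,\dots,X_k)$ and define the Doob martingale $M_k=\bE[\bu_i^\top Z_n\bu_j\mid\mathcal F_k]$, so that $M_n-M_0=\bu_i^\top W\bu_j$. Independence together with $\bE[P_m]=\bar P$ gives the closed form $M_k=\bu_i^\top\bar P^{\,n-k}P_k\cdots P_1\bu_j$, and since $\bu_i$ is an eigenvector of $\bar P$ with eigenvalue $1-\alpha\lambda_i$, the martingale difference collapses to
\[
D_k=M_k-M_{k-1}=-\alpha(1-\alpha\lambda_i)^{\,n-k}\,\big((X_k-\lambda_i I)\bu_i\big)^\top\!\big(P_{k-1}\cdots P_1\bu_j\big),
\]
where I have used $\bu_i^\top(X_k-\Sigma)=((X_k-\lambda_i I)\bu_i)^\top$. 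The key estimate is then $|D_k|\le\alpha(1-\alpha\lambda_i)^{\,n-k}c_i\lambda_i$, obtained from Cauchy--Schwarz, the defining bound $\|(X_k-\lambda_i I)\bu_i\|\le c_i\lambda_i$, and the fact that each $P_m$ is a contraction: its eigenvalues lie in $(1-\alpha r,1]\subset(1/2,1]$ because $\alpha r<1/2$, so $\|P_{k-1}\cdots P_1\bu_j\|\le 1$.

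Summing the squared increments and using $\alpha\lambda_i\le\alpha r<1/2$ to bound the geometric series by $\sum_{m\ge 0}(1-\alpha\lambda_i)^{2m}\le \tfrac{1}{\alpha\lambda_i(2-\alpha\lambda_i)}\le\tfrac{2}{3\alpha\lambda_i}$ yields $\sum_k|D_k|^2\le\tfrac{2}{3}\alpha c_i^2\lambda_i$. Two-sided Azuma then gives $\bP(|\bu_i^\top W\bu_j|\ge s_{ij})\le 2\exp\!\big(-3s_{ij}^2/(4\alpha c_i^2\lambda_i)\big)$ for each entry. Finally I would \emph{equalize} all these tails by choosing $s_{ij}^2=\tfrac{4}{3}c_i^2\lambda_i\,t^2/\sigma^2$; the admissibility constraint $\sum_{i,j}s_{ij}^2\le t^2$ then reads $\tfrac{4d}{3\sigma^2}\sum_i c_i^2\lambda_i\le 1$, which holds with equality precisely for the stated $\sigma^2=\tfrac{4d}{3}\sum_i c_i^2\lambda_i$, and every term of the union bound becomes $2\exp(-t^2/(\alpha\sigma^2))$, producing the claimed prefactor $2d^2$.

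I expect the main obstacle to lie in the recombination rather than in the per-entry estimate: a naive uniform threshold $s_{ij}=t/d$ overshoots $\sigma^2$ by a factor of $d$ and fails for large $t$, so the essential device is the variance-weighted split above, which is exactly what forces the definition of $\sigma^2$. Two minor points also need care: the weighted budget must be distributed only over the rows with $c_i^2\lambda_i>0$ (rows with $\lambda_i=0$ satisfy $c_i=0$, whence $\bu_i^\top W\bu_j=0$ identically and may be discarded), and the contraction and spectral bounds must be justified from the hypothesis $\alpha\in(0,\tfrac{1}{2r})$ together with $\lambda_i\le\|\Sigma\|\le r$.
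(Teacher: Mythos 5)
Your proposal is correct and follows essentially the same route as the paper: your Doob martingale $M_k=(1-\alpha\lambda_i)^{n-k}\bu_i^\top Z_k\bu_j$ is just the paper's martingale $Y_k=(1-\alpha\lambda_i)^{-k}\bu_i^\top Z_k\bu_j$ rescaled by the constant $(1-\alpha\lambda_i)^{n}$, the bounded-difference and geometric-series estimates coincide, and your variance-weighted thresholds $s_{ij}^2=\tfrac43 c_i^2\lambda_i t^2/\sigma^2$ are exactly the paper's choice $\epsilon=t\sqrt{4\lambda_i/3}\,c_i$ followed by the Frobenius-norm recombination. No gaps.
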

\begin{proof}
Without loss of generality, we can assume that $c_i,\lambda_i>0$ for all $i\in [d]$. We will first show that, for any $i,j\in [d]$, the following bound holds for all $t\geq 0$:
\begin{equation}\label{eq:zkij}
    \bP\left(\left\vert \bu_i^TZ_n\bu_j-\bE\left[\bu_i^TZ_n\bu_j\right]\right\vert \geq t\sqrt{\tfrac{4\lambda_i}{3}}\cdot c_i\right) \leq 2\exp\left(\frac{-t^2}{\alpha} \right).
\end{equation}
Set $Z_0=I_d$. Then we note that for all $k\geq 0$, 
\begin{equation*}
    \bE\left[\bu_i^TZ_k\bu_j\right] =  \bu_i^T\bE\left[Z_k\right]\bu_j=\bu_i^T\left(I-\alpha\Sigma\right)^k\bu_j= \left(1-\alpha \lambda_i\right)^k\cdot \delta_{i,j},
\end{equation*}
where $\delta_{i,j}$ stands for Kronecker delta.  For notational convenience, let us denote $z_k:=\bu_i^TZ_k\bu_j$. We have that 
\begin{align}\label{eq:mukmuk-1}
        \bE\left[z_k|X_{k-1},\cdots,X_{1}\right]&= \bu_{i}^T\bE\left[I-\alpha X_k \right]Z_{k-1}\bu_{j}=\left(1-\alpha \lambda_i\right)z_{k-1}.
    \end{align}
Denote $q_i:=1-\alpha \lambda_i$ and define the random variable $Y_k:=q_i^{-k}\cdot z_k$. Dividing both sides of \eqref{eq:mukmuk-1} by $q_i^k$, we obtain that $\bE\left[Y_k | X_{k-1},\cdots,X_1\right]=Y_{k-1}$. Thus, $\{Y_k\}_{k=1}^{+\infty}$ is a martingale with respect to $\{X_k\}_{k=1}^{+\infty}$. We observe that for all $k\geq 1$
\begin{equation*}
\begin{aligned}
     q_i^k\cdot \vert Y_k-Y_{k-1}\vert &= \vert z_k-q_i\cdot z_{k-1} \vert = \alpha \left\vert \bu_i^T\left(X_k-\lambda_i I\right)Z_{k-1}\bu_j\right\vert \leq \alpha c_i\lambda_i,
\end{aligned}
   \end{equation*}
   where the assumption $\alpha r\leq \frac{1}{2}$ yielded the bound $\Vert Z_{k-1}\Vert \leq 1$ a.s.   Thus, by Azuma's inequality, see \textit{e.g.} \cite{alon}, we have that for any $\epsilon\geq 0$
   \begin{equation}\label{eq:blah_blah}
   \begin{aligned}
       \bP\left(\vert z_n-\bE\left[z_n\right]\vert \geq \epsilon\right)&=\bP\left(\vert Y_n-Y_0\vert \geq \epsilon\cdot q_i^{-n}\right) \\&\leq  2\exp\left(\frac{-\epsilon^2}{2\alpha^2\lambda_i^2c_i^2\sum_{k=0}^{n-1} q_i^{2k}}\right).
       \end{aligned}
   \end{equation}
Note that by Jensen's inequality  
\begin{equation}\label{eq:jensen}
    \lambda_i \leq \Vert \Sigma\Vert = \Vert \bE\left[X_k\right]\Vert\leq \bE\left[\Vert X_k\Vert\right] \leq r.
\end{equation}
Therefore, by \eqref{eq:jensen} and since $\alpha \in (0,\frac{1}{2r})$, we obtain that $\sum_{k=0}^{n-1} q_i^{2k}\leq \frac{2}{3(1-q_i)}$. Plugging this bound into the right-hand side of \eqref{eq:blah_blah} and letting $\epsilon=t\sqrt{\tfrac{4\lambda_i}{3}}\cdot c_i$, we will obtain \eqref{eq:zkij}.  Finally, in order to see \eqref{eq:norm_bound}, we observe that by \eqref{eq:zkij}, with probability exceeding $1-2d^2\cdot \exp\left(-\frac{t^2}{\alpha}\right)$, it holds that
\begin{equation*}
    \left\Vert Z_n-\bE\left[Z_n\right]\right\Vert_F^2 \leq t^2\cdot \frac{4d}{3}\sum_{i=1}^{d}c_i^2\lambda_i,
\end{equation*}
where $\left\Vert Z_n-\bE\left[Z_n\right]\right\Vert_F$ is the Frobenius norm. Therefore, 
\begin{equation*}
         \bP\left(\left\Vert Z_n-\bE\left[Z_n\right]\right\Vert_F \geq t\cdot \sigma\right) \leq 2d^2\cdot\exp\left(\frac{-t^2}{\alpha} \right).
\end{equation*}
The result immediately follows since $\left\Vert Z_n-\bE\left[Z_n\right]\right\Vert\leq \left\Vert Z_n-\bE\left[Z_n\right]\right\Vert_F$.
\end{proof}

\end{document}